\newtheorem{theorem}{Theorem}
\newtheorem{corollary}[theorem]{Corollary}
\newtheorem{lemma}[theorem]{Lemma}
\theoremstyle{definition}
\newtheorem*{remark}{Remark}
\newcommand{\abs}[2][]{#1\lvert #2 #1\rvert}
\title{When Are There Continuous Choices for the Mean Value Abscissa?}%
\author{David Lowry-Duda and Miles Wheeler}
\begin{document}

\begin{abstract}
  The mean value theorem of calculus states that, given a differentiable function $f$ on
  an interval $[a, b]$, there exists at least one mean value abscissa $c$ such that the
  slope of the tangent line at $c$ is equal to the slope of the secant line through $(a,
  f(a))$ and $(b, f(b))$.
  In this article, we study how the choices of $c$ relate to varying the right endpoint
  $b$.
  In particular, we ask: When we can write $c$ as a continuous function of $b$ in some
  interval?

  Drawing inspiration from graphed examples, we first investigate this question by proving
  and using a simplified implicit function theorem.
  To handle certain edge cases, we then build on this analysis to prove and use a
  simplified Morse's lemma.
  Finally, further developing the tools proved so far, we conclude that if $f$ is
  analytic, then it is always possible to choose mean value abscissae so that $c$ is a
  continuous function of $b$, at least locally.
\end{abstract}

\maketitle


\section{Introduction and Statement of the Problem.}\label{sec:intro}

The mean value theorem is one of the truly fundamental theorems of calculus. It says that
if $f$ is a differentiable function defined on a closed interval $[a,b]$, then there is at
least one $c$ in the open interval $(a,b)$ such that
\begin{equation}\label{eq:MVT}
  \frac{f(b) - f(a)}{b-a} = f'(c).
\end{equation}
We call $c$ a \emph{mean value abscissa} for $f$ on $[a, b]$.
Looking at a graph $y=f(x)$ as in Figure~\ref{fig:basic}, the left hand side
of~\eqref{eq:MVT} is the slope of the secant line from $(a,f(a))$ to
$(b,f(b))$, while the right hand side is the slope of the tangent line passing through
$(c,f(c))$. Observe that there can be multiple choices of $c$; in the first graph in
Figure~\ref{fig:basic} we could have chosen either $c$ or $c'$.
\begin{figure}[h]
  \includegraphics[scale=0.9]{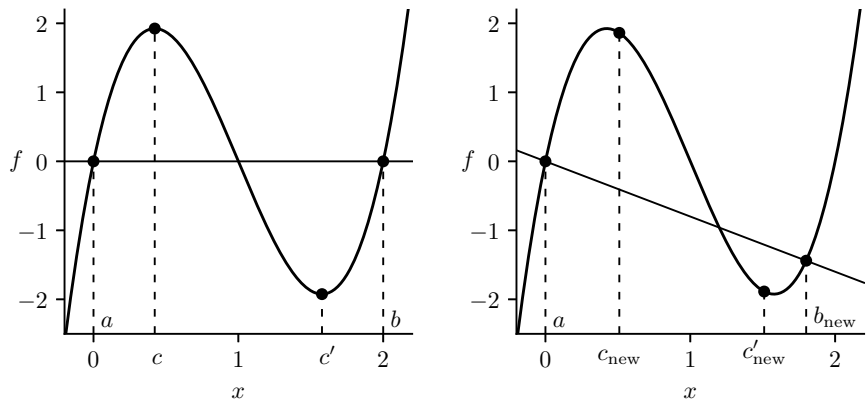}
  \centering
  \caption{%
  An illustration of the mean value theorem on the function $f(x) = x^3 - 3x^2 + 2x$.
  The straight lines are the secant lines.
  In each graph, the end-points of the secant line and two mean value abscissae are
  indicated by points on the curve.
  Dashed lines from each point indicate corresponding $x$-values.\label{fig:basic}}
\end{figure}

In this article we are interested in how the set of mean value abscissae $c$ changes as we
vary one of the endpoints of the interval, say the right endpoint $b$.
In particular, we are interested in the following problem: Suppose $c=c_0$ is our favorite
mean value abscissa for $a=a_0$ and $b=b_0$.
If $b$ changes slightly, can we also change $c$ slightly so that~\eqref{eq:MVT} is still
satisfied? In other words, \textbf{is there a locally continuous choice $c=C(b)$ of the
mean value abscissa?}
For example, in the right-hand graph in Figure~\ref{fig:basic}, we consider the new value
$b_{\mathrm{new}}$.
Here, it appears that the small change from $b$ to $b_{\mathrm{new}}$ corresponds to small
changes from $c$ to $c_{\mathrm{new}}$ and $c'$ to $c'_{\mathrm{new}}$ --- is this always
possible?

\section{Some examples.}\label{sec:examples}
To get a better feel for the problem we have set out for ourselves, let's
graph some functions and their mean value abscissae. To make our life simpler, we will
stick to examples with $a=a_0=0$ and $f(a_0)=f(b_0)=0$. Then the left hand side
of~\eqref{eq:MVT} is zero when $a=a_0$ and $b=b_0$, and so any corresponding mean value
abscissa $c=c_0$ has to be a critical point where $f'(c_0)=0$.
This may seem like a lot of assumptions, but in fact if someone hands us a more general
function $f$ we can always consider the related function
\begin{equation*}
  g(x) = f(x) - \Big( \frac{f(b_0) - f(a_0)}{b_0 - a_0}(x - a_0) + f(a_0) \Big),
\end{equation*}
which satisfies $g(a_0) = g(b_0) = 0$. Both $f$ and $g$ have the same solutions to the
mean value condition~\eqref{eq:MVT}.

\begin{figure}[h]
  \centering
  \includegraphics[scale=0.9]{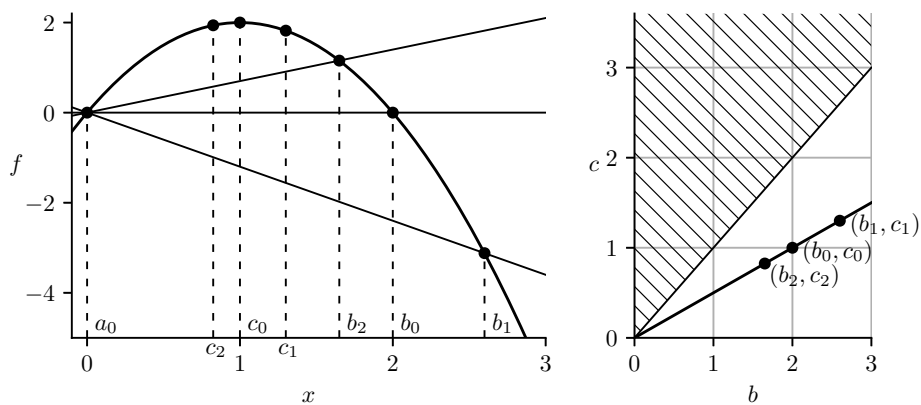}
  \caption{
    \textbf{Left}: the parabola $y = -x^2 + 2x$.
    Three pairs of points are shown: $(b_0, c_0)$, $(b_1, c_1)$, and $(b_2, c_2)$.
    For each $b_i$, the corresponding $c_i$ is a mean value abscissa on the interval
    $[a_0, b_i]$.
    \textbf{Right}: the graph of \emph{all} mean value abscissae as a function of
    $b$ (where $a_0 = 0$ is fixed); $b$ is on the horizontal axis, and $c = b/2$ is on the
    vertical axis.
    Points corresponding to the three pairs on the left are noted.
    In the mean value theorem, $b > c$, and we represent this by shading the region where
    $c \geq b$.
  \label{fig:parabola}
  }
\end{figure}
Consider the parabola at the left of Figure~\ref{fig:parabola}.
There is only one choice of $c_0$: the vertex.
When we slightly increase $b_0$ to $b_1$, we have to slightly increase $c_0$ to $c_1$.
Similarly if we decrease $b_0$ to $b_2$, then we have to decrease $c_0$ to $c_2$.
Plotting the mean value abscissae $c$ for each $b$, we get the picture at the
right of Figure~\ref{fig:parabola}.
Looking at the figure, $c$ seems to be a continuous function of $b$, and indeed in this
case we can solve~\eqref{eq:MVT} explicitly to get $c=b/2$.
In particular, the ratio $c/b$ is constant; for more on the class of functions with this
property, see~\cite{carter2017}.

\begin{figure}[h]
  \centering
  \includegraphics[scale=0.9]{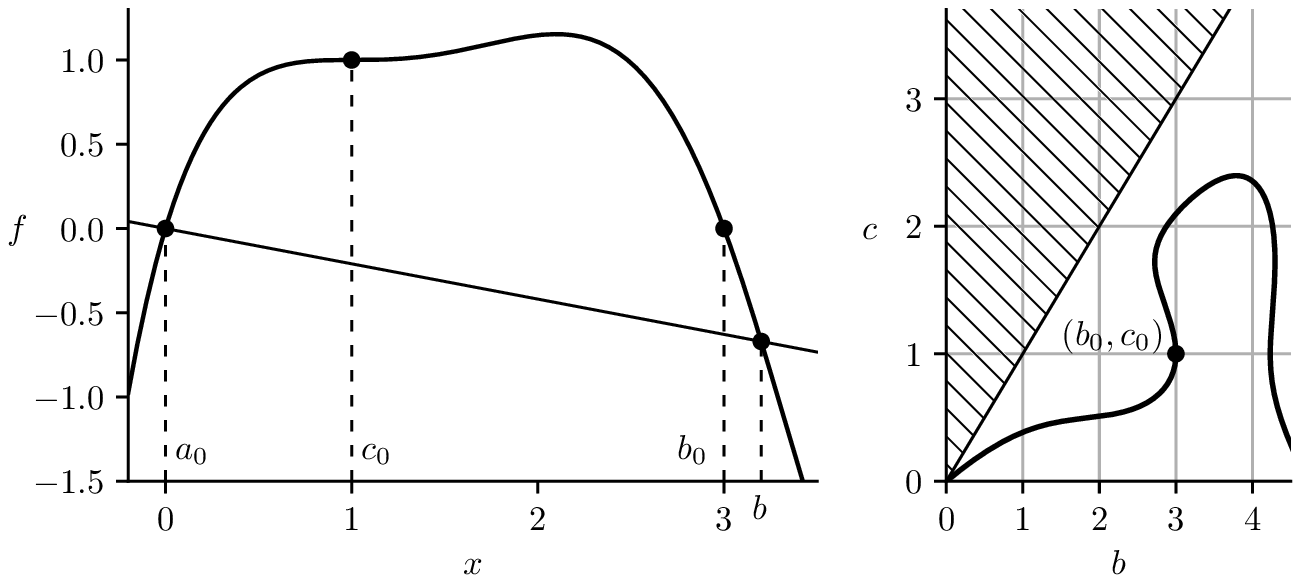}
  \caption{
    \textbf{Left}: the graph of a function with an inflection point.
    The point $(1, 1)$ is a mean value abscissa on the interval $[0, 3]$, but there is no
    continuous extension of this solution to a neighborhood of $b_0$.
    The straight line is the secant line corresponding to the interval $[a_0, b_1]$.
    \textbf{Right}: the graph of all mean value abscissae as a function of $b$, as in the
    previous Figure.
    The behavior is substantially more complicated.
    At the point $(b_0, c_0)$, we observe that $c$ is not a function of $b$.
  \label{fig:curlym}
  }
\end{figure}
Next consider the more complicated graph at left in Figure~\ref{fig:curlym}.
There are now two critical points.
One is a local maximum, and the behavior near this point is very similar to the behavior
near the vertex of the parabola.
The second one, which we have labeled as $c_0$, is a non-extremal critical point (it is
neither a local maximum nor a local minimum).
Suppose that $b_1$ is just a bit bigger than $b_0$.
Then the slope of the secant line which appears on the right hand side of~\eqref{eq:MVT}
is $<0$.
When $c$ is close to $c_0$, on the other hand, the right hand side $f'(c)$
of~\eqref{eq:MVT} is $\geq 0$.
There is no solution to~\eqref{eq:MVT} without choosing $c$ far away from $c_0$.

\section{The implicit function theorem.}

\subsection{Implicit equations.}
In the last section, we saw that the set of solutions $(b,c)$ of~\eqref{eq:MVT} can
look quite complicated. On the right of Figure~\ref{fig:curlym}, for instance, $c$ is not
a function of $b$ (the curve fails the ``vertical line test'') and $b$ is also not a
function of $c$ (the curve fails the ``horizontal line test''). 
This is possible because~\eqref{eq:MVT} is an \emph{implicit} equation.

Implicit equations show up all over the place in mathematics, for instance in geometry;
$x^2+y^2=1$ is the equation for the circle with radius one centered at the origin, while
$x^2 + 4xy + y^2 = 2$ is the equation for a certain hyperbola. By subtracting off the right
hand side, we can write any implicit equation in two variables as
\begin{align}
  \label{eqn:implicit}
  F(x,y) = 0
\end{align}
for some function $F$. It's often tempting to try and solve an implicit equation for
one of the variables, and indeed that's indeed how we got the formula
$c=b/2$ for the example on the right of Figure~\ref{fig:parabola}. When the equation gets
more complex, though, like it is in Figure~\ref{fig:curlym}, this method
becomes very tedious and is quite often impossible!

\subsection{The implicit function theorem.}

Thankfully, calculus offers us a powerful tool, called the \emph{implicit function
theorem}, to help us understand implicit equations.
The intuition behind the theorem is the following: Suppose that we have found one solution
$(x_0,y_0)$ to~\eqref{eqn:implicit}, and are only interested in solutions
of~\eqref{eqn:implicit} nearby this initial solution.
If the function $F$ is differentiable, then for $(x,y) \approx (x_0,y_0)$ we can
approximate $F$ as
\begin{align}
  \label{eqn:taylor}
  F(x,y) \approx F(x_0,y_0) + F_x(x_0,y_0)(x-x_0)
  + F_y(x_0,y_0)(y-y_0),
\end{align}
where the subscripts on $F$ are partial derivatives.
This is a first-order Taylor approximation of $F$, a two-variable version of the
tangent-line approximation for functions of a single variable.
Plugging~\eqref{eqn:taylor} into the equation~\eqref{eqn:implicit} that we are trying to
solve and using $F(x_0,y_0) = 0$, we get
\begin{align}
  \label{eqn:tayloreqn}
  F_x(x_0,y_0)(x-x_0)
  + F_y(x_0,y_0)(y-y_0) \approx 0.
\end{align}
While~\eqref{eqn:tayloreqn} is only approximately true, it's advantage is that it's a
\emph{linear} equation.
In particular, if $F_y(x_0,y_0) \ne 0$, then we can try to ``solve for $y$'', giving the
approximation
\begin{align}
  \label{eqn:taylorsolved}
  y = Y(x) \approx y_0 - \frac{F_x(x_0,y_0)}{F_y(x_0,y_0)}(x-x_0).
\end{align}
The implicit function theorem says that the conclusion of this intuitive argument is
nearly correct: as long as $F(x_0,y_0) = 0$ and $F_y(x_0,y_0) \ne 0$, we can
indeed solve $F(x,y)=0$ for $y$ when $(x,y)$ is close to $(x_0,y_0)$.

\begin{theorem}[Implicit Function Theorem]\label{thm:implicit}
  Suppose that $F=F(x,y)$ is a continuously differentiable function and that at some point
  $(x_0,y_0)$ we have
  \begin{align}
    \label{eqn:firstorder}
    F(x_0,y_0) = 0
    \quad\text{and}\quad
    F_y(x_0,y_0) \neq 0.
  \end{align}
  Then there exist $\varepsilon > 0$, $\delta > 0$, and a continuously differentiable
  function $Y(x)$ such that the implicit equation $F(x,y) = 0$ is equivalent to the
  explicit equation $y=Y(x)$ whenever $\abs{x-x_0} < \delta$ and $\abs{y-y_0} <
  \varepsilon$.
\end{theorem}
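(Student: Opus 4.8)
The plan is to reduce this two-variable statement to a family of one-variable problems that can be handled with the intermediate value theorem, which is the cleanest elementary route. Assume without loss of generality that $F_y(x_0,y_0) > 0$; otherwise replace $F$ by $-F$, which changes neither the zero set nor the hypotheses. Since $F_y$ is continuous, I would first choose a closed rectangle $R = [x_0-\delta_0, x_0+\delta_0] \times [y_0-\varepsilon, y_0+\varepsilon]$ on which $F_y > 0$. The point of this is that for each fixed $x$ in the base interval, the single-variable function $y \mapsto F(x,y)$ is \emph{strictly increasing} on $[y_0-\varepsilon, y_0+\varepsilon]$.

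The next step is to locate the zero and see that it is unique. Because $F(x_0,y_0) = 0$ and $y \mapsto F(x_0,y)$ is strictly increasing, we get $F(x_0, y_0-\varepsilon) < 0 < F(x_0, y_0+\varepsilon)$. Continuity of $F$ in the variable $x$ then gives a smaller $\delta \in (0,\delta_0]$ so that $F(x, y_0-\varepsilon) < 0 < F(x, y_0+\varepsilon)$ whenever $\abs{x-x_0} < \delta$. For each such $x$ the intermediate value theorem produces a zero $y \in (y_0-\varepsilon, y_0+\varepsilon)$ of $y \mapsto F(x,y)$, and strict monotonicity makes it the only one in that interval; call it $Y(x)$. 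This is exactly the assertion that $F(x,y) = 0$ is equivalent to $y = Y(x)$ on the box $\abs{x-x_0} < \delta$, $\abs{y-y_0} < \varepsilon$, so the existence-and-uniqueness content of the theorem is in hand.

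What remains, and what I expect to be the main obstacle, is showing that $Y$ is \emph{continuously differentiable} rather than merely a function. For continuity at a point $x_1$ in the domain, I would simply rerun the construction above with $(x_0,y_0)$ replaced by $(x_1, Y(x_1))$ and an arbitrarily small target radius, forcing $Y(x)$ to stay within that radius of $Y(x_1)$ for $x$ near $x_1$. For differentiability, fix $x_1$, set $h = x - x_1$ and $k = Y(x) - Y(x_1)$, and use that $F$ is $C^1$ to write a first-order Taylor expansion with remainder, $0 = F(x_1+h, Y(x_1)+k) = F_x(x_1, Y(x_1))\,h + F_y(x_1, Y(x_1))\,k + r$, where $r = o(\abs{h} + \abs{k})$; the increment stays inside $R$ once $\abs{h}$ is small because $\abs{k} < \varepsilon$ and $Y$ is continuous. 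Dividing by $h$, using $F_y \neq 0$, and letting $h \to 0$ (so $k \to 0$ by continuity) yields $Y'(x_1) = -F_x(x_1, Y(x_1))/F_y(x_1, Y(x_1))$. Since the right-hand side is a quotient of continuous functions with nonvanishing denominator, $Y'$ is continuous, and the proof is complete. The one genuinely delicate point is the bookkeeping in the remainder step — making sure the $o(\abs{h}+\abs{k})$ term can still be absorbed even though $k$ itself depends on $h$ — and the continuity of $Y$ established just beforehand is precisely what makes this go through.
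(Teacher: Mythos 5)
Your proof is correct, but it takes a genuinely different route from the paper. You reduce to a one-variable problem: since $F_y>0$ on a small rectangle, each slice $y\mapsto F(x,y)$ is strictly increasing, so the intermediate value theorem produces the zero and monotonicity makes it unique; the paper instead rewrites $F(x,y)=0$ as the fixed-point equation $y=K(y;x)=y-F_y(x_0,y_0)^{-1}F(x,y)$ and invokes the contraction mapping principle. Your argument is more elementary (it needs only the IVT, not Cauchy sequences and completeness), but it leans essentially on the ordering of $\mathbb{R}$ in the $y$-variable and so does not extend to systems of equations, whereas the contraction-mapping proof generalizes verbatim to higher dimensions and, as the paper notes, is constructive: iterating $K$ gives an algorithm for approximating $Y(x)$. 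For differentiability the paper integrates $\frac{d}{dt}F(x+th,\,tY(x+h)+(1-t)Y(x))$ over $[0,1]$ to produce the difference quotient exactly, while you use the first-order Taylor remainder $r=o(\lvert h\rvert+\lvert k\rvert)$; the delicate point you flag is real but routine to close --- from $\lvert F_y\rvert\,\lvert k\rvert\le\lvert F_x\rvert\,\lvert h\rvert+\eta(\lvert h\rvert+\lvert k\rvert)$ with $\eta$ small you first extract a Lipschitz bound $\lvert k\rvert\le C\lvert h\rvert$, after which $r=o(\lvert h\rvert)$ and the quotient converges to $-F_x/F_y$ as claimed. Both proofs then conclude $C^1$ regularity of $Y$ the same way, by reading it off the formula for $Y'$.
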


\subsection{Proof of the implicit function theorem.}

The implicit function theorem above is an \emph{existence} theorem: it says there
\emph{exists} a function $Y=Y(x)$ with some special properties. Like many of the existence
theorems in calculus,  the implicit function theorem has a nice proof using the
contraction mapping principle. The implicit function theorem presented here is a
simplified version, but the proofs of more general versions share have the same basic
outline; see, for instance, \cite[\S13]{strichartz}.

The contraction mapping principle, which is also called Banach's fixed-point theorem,
concerns equations of the form
\begin{align}
  \label{eqn:fixedpoint}
  y = K(y), \qquad \text{$y$ in $I$}
\end{align}
where $I=[A,B]$ is a closed interval.
We call~\eqref{eqn:fixedpoint} a ``fixed-point equation'' because it says that the point
$y$ is ``fixed'' (or unchanged) when we apply the function $K$ to it. The theorem assumes
that the function $K$ satisfies
\begin{align}
  \label{eq:contraction}
  \abs{ K(y) - K(y') } &\le \rho \abs{ y - y' }
  \qquad \text{for all $y,y'$ in $I$}
\end{align}
for some constant $\rho < 1$. If $y,y'$ are two points a distance $d$ apart,
then~\eqref{eq:contraction} says that the distance between their images $K(y),K(y')$ is at
most $\rho d$. Since $\rho d < d$, the points are closer together after we apply $K$, and
so we
call $K$ a \emph{contraction}.

\begin{theorem}[Contraction Mapping Principle]\label{thm:contraction}
  Suppose that the function $K$ is defined on a closed interval $I$ where it
  satisfies~\eqref{eq:contraction} for some constant $\rho < 1$. If $K(y)$ lies in $I$ for
  every $y$
  in $I$, then the fixed-point equation~\eqref{eqn:fixedpoint} has a unique solution
  $y^*$.
\end{theorem}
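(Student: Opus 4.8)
The plan is to prove Theorem~\ref{thm:contraction} by the classical method of successive approximations. First I would fix an arbitrary starting point $y_0$ in $I$ and define a sequence recursively by $y_{n+1} = K(y_n)$. Since $K$ sends $I$ into itself, every term $y_n$ again lies in $I$, so the recursion never leaves the interval and the sequence is well defined.

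The heart of the argument is showing that this sequence converges. Applying the contraction estimate~\eqref{eq:contraction} repeatedly gives $\abs{y_{n+1} - y_n} \le \rho^n \abs{y_1 - y_0}$, so consecutive terms get exponentially close together. Then for any $m > n$ the triangle inequality and the sum of a geometric series yield $\abs{y_m - y_n} \le \frac{\rho^n}{1-\rho}\abs{y_1 - y_0}$, and the right-hand side tends to $0$ as $n \to \infty$. Hence $(y_n)$ is a Cauchy sequence. Because $I = [A,B]$ is a closed and bounded interval, it contains the limit of every Cauchy sequence of its points, so $y_n \to y^*$ for some $y^*$ in $I$.

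Next I would verify that $y^*$ actually solves~\eqref{eqn:fixedpoint}. The condition~\eqref{eq:contraction} forces $K$ to be continuous (indeed Lipschitz), so passing to the limit in $y_{n+1} = K(y_n)$ gives $y^* = \lim y_{n+1} = \lim K(y_n) = K(\lim y_n) = K(y^*)$. Uniqueness is then immediate: if $y^*$ and $z^*$ were two solutions, then $\abs{y^* - z^*} = \abs{K(y^*) - K(z^*)} \le \rho \abs{y^* - z^*}$, and since $\rho < 1$ this forces $\abs{y^* - z^*} = 0$.

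Most of these steps are routine. The one point that deserves care — and the closest thing to a genuine obstacle — is the convergence step: one must justify that a Cauchy sequence lying in the closed interval $I$ genuinely has a limit, and that this limit stays in $I$. This is exactly the completeness of the real numbers together with the fact that $[A,B]$ is closed, and it is the only place where the hypothesis that $I$ is a \emph{closed} interval (rather than, say, an open one) gets used.
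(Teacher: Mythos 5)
Your proposal is correct and follows essentially the same route as the paper: the iteration $y_{n+1}=K(y_n)$, the geometric-series Cauchy estimate, completeness plus closedness of $I$ to get the limit $y^*$ in $I$, continuity of $K$ to pass to the limit, and the standard one-line uniqueness argument. The only difference is cosmetic (you prove existence before uniqueness, and your exponent $\rho^n$ in the consecutive-term bound is the sharper, correctly indexed version of the paper's $\rho^{n-1}$), so nothing further is needed.
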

\begin{proof}
  First we show that solutions of~\eqref{eqn:fixedpoint} are unique.
  Suppose that $y$ and $y'$ both solve~\eqref{eqn:fixedpoint}, so that they
  satisfy $y = K(y)$ and $y = K(y')$. Then by~\eqref{eq:contraction} we see that
  \begin{align*}
    \lvert y - y' \rvert = \lvert K(y) - K(y')
    \rvert \le \rho \lvert y - y' \rvert.
  \end{align*}
  Since $\rho < 1$, this is only possible if $y=y'$.

  Having shown uniqueness of a potential solution, we now show that~\eqref{eqn:fixedpoint}
  has a solution $y^*$. Choose any point $y_0$ in $I$ and define the sequence
  $y_1,y_2,y_3,\ldots$
  recursively by
  \begin{align}
    \label{eqn:recursion}
    y_{n+1} = K(y_n).
  \end{align}
  Since $K$ sends points in $I$ to points in $I$, this definition makes sense and we can
  prove by induction that $y_n$ lies in $I$ for all $n$. We will show that $\lim_{n\to
  \infty} y_n$ exists, and that it is the fixed point $y^*$ we are looking for.

  By repeatedly using~\eqref{eq:contraction} and~\eqref{eqn:recursion}, we can estimate
  the distance between successive terms $y_{n+1}$ and $y_n$ in our sequence:
  \begin{equation}\label{eq:contraction_ineq}
    \begin{split}
      \lvert y_{n+1} - y_n \rvert
      &= \lvert K(y_{n}) - K(y_{n-1}) \rvert \\
      &\le \rho \lvert y_n - y_{n-1} \rvert \\
      & = \rho \lvert K(y_{n-1}) - K(y_{n-2}) \rvert \\
      &\le \rho^2 \lvert y_{n-1} - y_{n-2} \rvert \\
      & \le \cdots \le \rho^{n-1} \lvert y_1 - y_0 \rvert.
    \end{split}
  \end{equation}
  Since $\rho < 1$, the right hand side converges to zero very quickly as $n \to \infty$.
  If $n > m$, we can then repeatedly use~\eqref{eq:contraction_ineq} to estimate the difference
  between $y_n$ and $y_m$:
  \begin{align*}
    \lvert y_n - y_m \rvert
    &=  \lvert (y_n - y_{n-1}) + (y_{n-1} - y_{n-2}) + \cdots + (y_{m+1} - y_m) \rvert\\
    &\le  \abs{y_n - y_{n-1}} + \abs{y_{n-1} - y_{n-2}} + \cdots + \abs{y_{m+1} - y_m} \\
    & \le \lvert y_1 - y_0 \rvert (\rho^{n-1} + \rho^{n-2} + \cdots + \rho^m)\\
    & = \lvert y_1 - y_0 \rvert \rho^m \frac{1-\rho^n}{1-\rho}\\
    & < \lvert y_1 - y_0 \rvert \frac{\rho^m}{1 - \rho}.
  \end{align*}
  Here in the second step we have used the triangle inequality, and in the second-to-last step we
  have used the formula for the (partial) sum of a geometric series. As before the right
  hand side converges to 0 as $m \to 0$, which now shows that the sequence $\{y_n\}$ is
  Cauchy. In particular, the limit $y^* = \lim_{n\to\infty} y_n$ exists. Since each
  $y_n$ lies in the closed interval $I$, the same is true for the limit $y^*$.

  Finally, we note that~\eqref{eq:contraction} implies that $K$ is continuous. Taking the
  limit of the recurrence~\eqref{eqn:recursion} as $n \to \infty$, we therefore get $y^* =
  K(y^*)$, i.e.~that $y^*$ solves~\eqref{eqn:fixedpoint}.
\end{proof}

To use Theorem~\ref{thm:contraction} to prove Theorem~\ref{thm:implicit}, we first
rewrite $F(x,y)=0$ as a fixed-point equation $y=K(y;x)$ for $y$. When $x$ is close to
$x_0$ and $I$ is a small interval centered at $y_0$, we will then show that this $K$ is a
contraction mapping satisfying the hypotheses of Theorem~\ref{thm:contraction}. Here the
notation $K(y;x)$ is to remind us that $y$ is the main variable while $x$ is a parameter.

To keep things simple, we only prove the implicit function theorem for the special
case $x_0=y_0=0$. It's easy to prove the general case from this specific one by
considering the shifted function $G(x,y) = F(x_0+x,y_0+y)$. The inspiration for the proof
is our informal argument which lead to~\eqref{eqn:taylorsolved}. This argument started
with the Taylor expansion~\eqref{eqn:taylor}, but to make it rigorous we start
with an exact version of that formula:
\begin{align}
  \label{eqn:taylorexact}
  F(x,y) = F_x(0,0)x
  + F_y(0,0)y +
  r(x,y).
\end{align}
Here $r(x,y)$ is the remainder term, which is small when $(x,y)$ is close to $(0,0)$, and
we have used that $x_0=y_0=0$ and $F(x_0,y_0) = F(0,0) = 0$.
To devise a mapping $K$, we set $F(x,y)=0$ and do some algebra to bring the $y$ to the
left hand side to get
\begin{align}
  \label{eqn:prefixed}
  y = -F_y(0,0)^{-1} \Big(F_x(0,0)x + r(x,y)\Big).
\end{align}
Solving~\eqref{eqn:taylorexact} for $r(x,y)$ and plugging into~\eqref{eqn:prefixed},
things simplify a bit and we get
\begin{align}
  \label{eqn:postfixed}
  y = y-F_y(0,0)^{-1} F(x,y).
\end{align}
We see that~\eqref{eqn:postfixed} is true if and only if $F(x,y) = 0$. What we
have gained, though, is that~\eqref{eqn:postfixed} gives a fixed-point equation for $y$,
and so we can hope to solve it for $y$ by applying Theorem~\ref{thm:contraction} with
\begin{align}
  \label{eqn:ourcontraction}
  K=K(y;x) =  y-F_y(0,0)^{-1} F(x,y).
\end{align}

We apply Theorem~\ref{thm:contraction} with $\rho = 1/2$ and $I =
[-\varepsilon,\varepsilon]$ for some small number $\varepsilon > 0$ which we still have to
determine. This ensures that we are only considering $y$-values which are close to
$y_0=0$. We will also restrict ourselves to $x$-values which are close to $x_0=0$, say $x$
in $[-\delta,\delta]$ for some other small number $\delta > 0$. The hypotheses of
Theorem~\ref{thm:contraction} will therefore be met as long as
\begin{alignat}{2}
  \label{eqn:kcontain}
  \abs{K(y;x)} &\le \varepsilon  &\qquad& \text{for $\abs x \le \delta$, $\abs y \le \varepsilon$},\\
  \label{eqn:kcontract}
  \abs{K(y;x)-K(y';x)} &\le \tfrac 12 \abs{y-y'} && \text{for $\abs x \le \delta$, $\abs y, \abs{y'} \le \varepsilon$}.
\end{alignat}
The second inequality~\eqref{eqn:kcontract} is the contraction
condition~\eqref{eq:contraction}, while the first~\eqref{eqn:kcontain} guarantees that
$K(y;x)$ lies in $I$ whenever $y$ does.

We'll prove~\eqref{eqn:kcontract} using, of all things, the mean value theorem.
Differentiating with respect to $y$ we get
\begin{equation*}
  K_y(y;x) = 1 - F_y(0,0)^{-1} F_y(x,y).
\end{equation*}
It's clear that at $(0,0)$, the right hand side is $0$.
Since $F_y$ is continuous, we can pick $\varepsilon > 0$ small enough
that the right hand side is bounded by $1/2$ whenever $\abs x, \abs y \le \varepsilon$.
Now suppose that $\abs x, \abs y, \abs {y'} \le \varepsilon$. Applying the mean
value theorem to $K_x$ on the interval between $y$ and $y'$, we get that
\begin{equation}\label{eq:K_contraction}
  \lvert K(y';x) - K(y;x) \rvert
  =
  \lvert K_y(c;x) \rvert \lvert y' - y \rvert
  \leq \tfrac 12 \lvert y' - y \rvert
\end{equation}
for some point $c$ between $y$ and $y'$. This shows~\eqref{eqn:kcontract}.

We still need to show~\eqref{eqn:kcontain}. As long as $\abs x, \abs y \le
\varepsilon$, we can use~\eqref{eqn:kcontract} to estimate
\begin{align}
  \label{eqn:containshow}
  \begin{aligned}
    \lvert K(y;x) \rvert
    &\le \lvert K(y;x) - K(0;x) \rvert + \lvert K(0;x) \rvert \\
    &\le \tfrac 12 \abs y + \abs{K(0;x)}\\
    &\le \tfrac 12 \varepsilon + \abs{K(0;x)}.
  \end{aligned}
\end{align}
Since $K(0;x)$ is a continuous function of $x$ and $K(0;0) = 0$, there exists a $\delta
> 0$ so that $\abs{K(0;x)} \le \varepsilon/2$ whenever $\abs x \le \delta$. Picking
$\delta$ smaller if necessary so that $\delta \le \varepsilon$,~\eqref{eqn:containshow}
finally implies that $\abs{K(y;x)} \le \frac 12 \varepsilon + \frac 12\varepsilon =
\varepsilon$ whenever $\abs x \le \delta$ and $\abs y \le \varepsilon$.

Now that we have finished proving~\eqref{eqn:kcontain} and~\eqref{eqn:kcontract}, we
can apply Theorem~\ref{thm:contraction} to guarantee that the fixed point equation
$y=K(y;x)$ has a unique solution $y=Y(x)$ in $[-\varepsilon,\varepsilon]$ for each $x$ in
$[-\delta,\delta]$. Since the fixed-point equation $y=K(y;x)$ is equivalent to $F(x,y) =
0$, this completes the proof of the theorem except for the claim that $Y$ is a
continuously differentiable function.

To see that $Y(x)$ is continuous, we write
\begin{equation*}
  \begin{split}
    \lvert Y(x') - Y(x) \rvert &= \lvert K(Y(x');x') - K_x (Y(x);x) \rvert
    \\
    &\leq
    \lvert K(Y(x');x') - K(Y(x);x') \rvert + \lvert K(Y(x);x') - K(Y(x);x) \rvert.
  \end{split}
\end{equation*}
By~\eqref{eq:K_contraction}, the first term on the right hand side is bounded
by $\frac{1}{2}\lvert Y(x') - Y(x) \rvert$.
Rearranging, this shows that
\begin{equation*}
  \lvert Y(x') - Y(x) \rvert \leq 2 \lvert K(Y(x);x') - K(Y(x);x) \rvert.
\end{equation*}
The continuity of $Y$ then follows from the continuity of $K(y;x)$ as a function of $x$.

Next we show that $Y$ is continuously differentiable and calculate its derivative. If we
knew ahead of time that $Y$ was differentiable, then we could solve for $Y'(x)$ by
differentiating $F(x,Y(x))=0$ using the chain rule. Since we do not know yet that $Y$ is
differentiable, we instead look at the difference quotient
\begin{align*}
  0 = \frac{F(x+h,Y(x+h))-F(x,Y(x))}h.
\end{align*}
Using the fundamental theorem of calculus in a clever way, we rewrite the numerator of
this difference quotient as
\begin{align*}
  0 &= F(x+h, Y(x+h)) - F(x, Y(x)) \\
  &= \int_0^1 \frac{d}{dt} F(x + th, tY(x+h) + (1-t)Y(x)) dt \\
  &= h \int_0^1 F_xdt + (Y(x+h) - Y(x)) \int_0^1 F_y  dt,
\end{align*}
where the arguments of both $F_x$ and $F_y$ are $(x + ty, tY(x+h) + (1-t) Y(x))$.
Notice that the chain rule has caused a $(Y(x+h) - Y(x))$ to appear.
We rearrange this into an expression for the difference quotient
\begin{equation*}
  \frac{Y(x+h) - Y(x)}{h} = - \bigg[ \int_0^1 F_y dt \bigg]^{-1} \int_0^1 F_x dt.
\end{equation*}
Since $F_y(0,0) \neq 0$, for $h$ small enough the integral of
$F_y$ will not vanish, and so it is valid to divide by it.

Now we take the limit as $h \to 0$. On the left hand side, this gives $Y'(x)$ directly. On
the right hand side, we have to pass the limit inside the integrals. Since $(x+th, tY(x+h)
+ (1-t)Y(x))$ converges uniformly to $(x, Y(x))$ as $h \to 0$, this is justified and we
get
\begin{equation}\label{eq:ift_derivative}
  Y'(x) = - \frac{F_x(x,Y(x))}{F_y(x,Y(x))}.
\end{equation}
This proves that $Y$ is differentiable. Since $Y$ is continuous and $F$ is continuously
differentiable, the right hand side of~\eqref{eq:ift_derivative} is continuous, and so $Y$
is in fact continuously differentiable. Looking at~\eqref{eq:ift_derivative}, we also see
that this justifies the approximate formula~\eqref{eqn:taylorsolved} as we had hoped!

By repeatedly differentiating~\eqref{eq:ift_derivative}, we discover that if $F$ is
$k$-times continuously differentiable, then so is $Y$. We record this observation as a
corollary to the proof of Theorem~\ref{thm:implicit}.

\begin{corollary}\label{cor:higherdiff}
  In Theorem~\ref{thm:implicit}, the derivative $Y'$ is given by~\eqref{eq:ift_derivative}. 
  If $F$ is $k$-times continuously differentiable, then $Y$ is
  $k$-times continuously differentiable.
\end{corollary}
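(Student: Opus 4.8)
Since formula~\eqref{eq:ift_derivative} for $Y'$ was already derived in the course of proving Theorem~\ref{thm:implicit}, the first sentence of the corollary needs no further argument --- that is exactly why it is a corollary \emph{to the proof}. For the second sentence the plan is a straightforward induction on $k$, using~\eqref{eq:ift_derivative} as the mechanism that trades one derivative of $F$ for one extra derivative of $Y$.

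The base case $k=1$ is precisely the last part of Theorem~\ref{thm:implicit}. For the inductive step, assume the claim for $k-1$ and suppose $F$ is $k$-times continuously differentiable. The function $Y$ produced by Theorem~\ref{thm:implicit} does not depend on how smooth $F$ is beyond being $C^1$, so, applying the inductive hypothesis to this very same $Y$, we learn that $Y$ is $(k-1)$-times continuously differentiable. Now examine the right-hand side of~\eqref{eq:ift_derivative}: it is built from $F_x$ and $F_y$, each $(k-1)$-times continuously differentiable since $F$ is $k$-times continuously differentiable, precomposed with the map $x \mapsto (x, Y(x))$, whose components are $(k-1)$-times continuously differentiable, and then combined into a quotient whose denominator $F_y(x, Y(x))$ is nonzero for $x$ near $x_0$ (shrinking $\delta$ if necessary, exactly as in the proof of Theorem~\ref{thm:implicit}). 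Granting that such a combination is again $(k-1)$-times continuously differentiable, we conclude that $Y'$ is $(k-1)$-times continuously differentiable, i.e.\ that $Y$ is $k$-times continuously differentiable, which closes the induction.

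The step that genuinely needs justification --- and really the only content beyond the bookkeeping above --- is the closure statement just invoked: a sum, product, composition, or quotient (with nowhere-vanishing denominator) of $j$-times continuously differentiable functions is again $j$-times continuously differentiable. I would isolate this as a small lemma and prove it by an inner induction on $j$: differentiating once via the product, chain, or quotient rule expresses the derivative in terms of functions that are $(j-1)$-times continuously differentiable and of the same type, so the inner inductive hypothesis applies. This is entirely routine, so in the write-up I would either dispatch it in a sentence or simply cite it; once it is in hand, the corollary follows immediately.
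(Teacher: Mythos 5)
Your proof is correct and is essentially the paper's own argument: the paper justifies the corollary with the single sentence ``by repeatedly differentiating~\eqref{eq:ift_derivative},'' and your induction on $k$ --- bootstrapping the regularity of $Y$ through the formula $Y'(x) = -F_x(x,Y(x))/F_y(x,Y(x))$, with the nonvanishing of the denominator near $x_0$ --- is precisely the careful version of that remark. The closure lemma you isolate (sums, products, compositions, and quotients of $C^j$ functions are $C^j$) is the right routine fact to cite, and nothing further is needed.
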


\subsection{Application to the Mean Value Abscissa.}

With the implicit function theorem in hand, we are now ready to investigate the
possibility of determining when there exist locally continuous choices of the mean value
abscissa $c$ in~\eqref{eq:MVT}. The first step is to rewrite~\eqref{eq:MVT} as $F(b,c) =
0$ where
\begin{equation}
  \label{eq:F_def}
  F(b,c) =  \frac{f(b) - f(a)}{b-a} - f'(c).
\end{equation}
From now on we assume that $f$ is twice continuously differentiable, in which case $F$ is
once continuously differentiable.

Suppose that $c_0$ is a mean value abscissa corresponding to $b_0$, i.e.~that
$F(b_0,c_0) = 0$. A quick computation shows that
\begin{equation*}
  F_b(b_0,c_0)
  = \frac{f'(b_0) - f'(c_0)}{b_0-a}, \qquad F_c(b_0,c_0) = - f''(c_0).
\end{equation*}
Thus $F_c(b_0,c_0) \neq 0$ is true exactly when $f''(c_0) \neq 0$.
And if $f''(c_0) \neq 0$, then by Theorem~\ref{thm:implicit} there exists $\varepsilon > 0$,
$\delta > 0$, and a continuously differentiable function $C(b)$ so that $F(b,c) = 0$ is
equivalent to $c=C(b)$ whenever $\abs{c-c_0} < \varepsilon$ and $\abs{b-b_0} < \delta$.

Although we have focused on the question of when the mean value abscissa $c$ can be
written as a continuous function of the right endpoint $b$, we also have the data for the
converse question: When is the right endpoint $b$ a function of the mean value abscissa
$c$? By Theorem~\ref{thm:implicit}, $b$ can be written as a function of $c$ near
$(b_0,c_0)$ when $F_b(b_0,c_0) \neq 0$, or equivalently when $f'(b_0) \neq f'(c_0)$.

In total we have proved the following theorem.
\begin{theorem}\label{thm:mainI}
  Let $f$ be a twice continuously differentiable function, fix an interval
  $[a_0,b_0]$, and let $c_0$ be a mean value abscissa for $f$ on $[a_0,b_0]$.
  \begin{enumerate}
  \item[(a)] Suppose that $f''(c_0) \ne 0$. Then there is a continuously differentiable
    function $C(b)$ so that
    \begin{align*}
      \frac{f(b)-f(a)}{b-a} = f'(C(b))
    \end{align*}
    for all $b$ close to $b_0$. There are no other solutions $(b,c)$ of~\eqref{eq:MVT}
    close to $(b_0,c_0)$.
  \item[(b)] Suppose that $f'(b_0) \neq f'(c_0)$. Then there is a continuously
    differentiable function $B(c)$ so that
    \begin{align*}
      \frac{f(B(c))-f(a)}{B(c)-a} = f'(c)
    \end{align*}
    for all $c$ close to $c_0$. There are no other solutions $(b,c)$ of~\eqref{eq:MVT}
    close to $(b_0,c_0)$.
  \end{enumerate}

\end{theorem}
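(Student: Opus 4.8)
The plan is to deduce Theorem~\ref{thm:mainI} directly from the implicit function theorem (Theorem~\ref{thm:implicit}), applied to the function $F$ of~\eqref{eq:F_def}, essentially formalizing the discussion that precedes the statement. First I would set up the hypotheses of Theorem~\ref{thm:implicit}: since the interval $[a_0,b_0]$ is nondegenerate we have $b_0-a_0\neq 0$, so the difference quotient in~\eqref{eq:F_def} is defined near $b=b_0$, and — as already noted — the assumption that $f$ is twice continuously differentiable makes $F$ continuously differentiable on a neighborhood of $(b_0,c_0)$. Because $c_0$ is a mean value abscissa for $f$ on $[a_0,b_0]$, we also have $F(b_0,c_0)=0$. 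Differentiating and using $F(b_0,c_0)=0$ to simplify the quotient rule gives
\begin{equation*}
  F_b(b_0,c_0) = \frac{f'(b_0)-f'(c_0)}{b_0-a_0}, \qquad F_c(b_0,c_0) = -f''(c_0).
\end{equation*}

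For part~(a) I would note that $f''(c_0)\neq 0$ is equivalent to $F_c(b_0,c_0)\neq 0$, so Theorem~\ref{thm:implicit} (with $x=b$, $y=c$) yields $\varepsilon,\delta>0$ and a continuously differentiable $C(b)$ for which, when $\abs{b-b_0}<\delta$ and $\abs{c-c_0}<\varepsilon$, the equation $F(b,c)=0$ is equivalent to $c=C(b)$. Rewriting $F(b,C(b))=0$ using the definition of $F$ gives the displayed identity of part~(a), while the word ``equivalent'' is exactly the claim that the pairs $(b,C(b))$ are the only solutions of~\eqref{eq:MVT} near $(b_0,c_0)$; in particular $C(b_0)=c_0$ by the uniqueness built into the implicit function theorem.

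For part~(b) I would instead use that $f'(b_0)\neq f'(c_0)$, together with $b_0\neq a_0$, forces $F_b(b_0,c_0)\neq 0$. The one point requiring care is purely notational: Theorem~\ref{thm:implicit} as stated solves $F(x,y)=0$ for the \emph{second} variable, so I would apply it to the relabeled function $\widetilde F(c,b):=F(b,c)$, whose derivative in its second argument at $(c_0,b_0)$ is $F_b(b_0,c_0)\neq 0$. This produces a continuously differentiable $B(c)$, defined for $c$ near $c_0$, with $\widetilde F(c,B(c))=F(B(c),c)=0$, which is the displayed identity of part~(b); as before, the equivalence statement gives that there are no other nearby solutions, and $B(c_0)=b_0$. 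I do not expect a genuine obstacle here: the entire content lies in Theorem~\ref{thm:implicit}, and the work is just verifying its hypotheses in each case and translating the conclusion back into the language of~\eqref{eq:MVT}.
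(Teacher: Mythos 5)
Your proposal is correct and follows essentially the same route as the paper: rewrite~\eqref{eq:MVT} as $F(b,c)=0$ with $F$ as in~\eqref{eq:F_def}, compute $F_b(b_0,c_0)=(f'(b_0)-f'(c_0))/(b_0-a_0)$ and $F_c(b_0,c_0)=-f''(c_0)$ using $F(b_0,c_0)=0$, and apply Theorem~\ref{thm:implicit} in each variable. Your explicit handling of the variable swap for part~(b) is a detail the paper passes over silently, but the argument is the same.
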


\begin{remark}
  Given an initial solution $(b_0,c_0)$, our proof of Theorem~\ref{thm:mainI} is
  \emph{constructive} in that iterating the contraction map from the proof
  Theorem~\ref{thm:implicit} actually gives us an algorithm for approximating $B(c)$ or
  $C(b)$ to any order of accuracy.
\end{remark}

This theorem gives perspective on Figure~\ref{fig:curlym}. The mean value abscissa in that
figure was at an inflection point, where $f''(c_0) = 0$, and so Theorem~\ref{thm:mainI}a is
inconclusive about whether we can write $c=C(b)$. Looking at the figure it appears that we
cannot. On the other hand $f'(b_0) < f'(c_0) = 0$, and so Theorem~\ref{thm:mainI}
implies that we \emph{can} write $b=B(c)$.

\section{The Morse Lemma.}\label{sec:morse}

We have now shown that there exist continuous choices of $c=C(b)$ around those mean value
abscissae such that $f''(c_0) \neq 0$. Conversely, we've shown that when there is a mean
value abscissa $c$ such that $f'(b_0) \neq f'(c_0)$, then $b$ can be written as a
continuous function in a neighborhood of $c_0$.

But what if both $f''(c_0) = 0$ and $f'(b_0) = f'(c_0)$? As before, we return to pictorial
investigation. Fortunately, these are two strong constraints and we quickly identify
interesting aspects from graphs.

\begin{figure}[h]
  \includegraphics[scale=0.9]{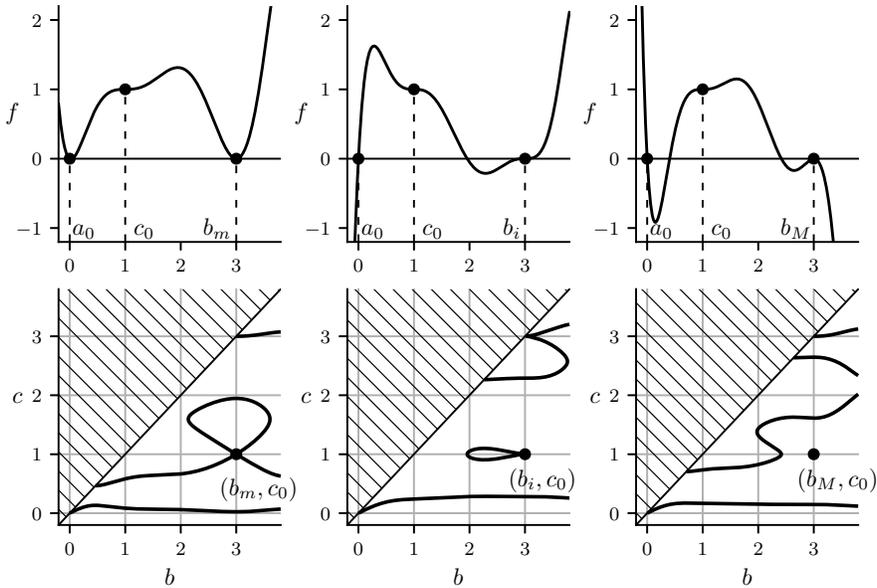}
  \centering
  \caption{%
  \textbf{Top}: Three graphs of functions $f$ with an interval and corresponding
  mean value abscissa indicated.
  In each graph, $f''(c_0) = 0$.
  In the left graph, $f''(b_m) > 0$.
  In the middle graph, $f''(b_i) = 0$.
  In the right graph, $f''(b_M) < 0$.
  \textbf{Bottom}: Below each graph is a plot of all mean value abscissae as a function of
  $b$, as in previous figures.
  In the first graph, there appear to be multiple choices of continuous function $c(b)$.
  In the second graph, there is a continuous extension on an interval with $b_i$ as an endpoint.
  In the third graph, the initial solution is completely isolated.
  \label{fig:triple}}
\end{figure}

For ease, we suppose again that $f(a_0) = f(b_0) = 0$, and we now suppose that $f'(c_0) =
f'(b_0) = f''(c_0) = 0$.
In Figure~\ref{fig:triple}, we examine three different functions $f$: each
satisfies$f''(c_0) = 0$, but $f''(b)$ is positive on the left, zero in the middle, and
negative on the right.
We've named these three values of $b$ as $b_m$, $b_i$, and $b_M$ (according to whether $b$
is a minimum, an inflection point, or a maximum, respectively).

Examining the top left graph of Figure~\ref{fig:triple}, we observe that in a small
neighborhood of $c_0$, all tangent lines have nonnegative slope.
Similarly, for all $b$ in a small neighborhood around $b_m$, the secant lines from $(0,0)$
to $(b, f(b))$ have nonnegative slope.
Qualitatively, it appears that for $b'$ just a little less than $b_m$, we could vary $c$
to match slopes.
\emph{But which direction should $c$ be moved?}
We can see this apparent choice of direction in the mean value abscissa graph at bottom
left: near $(b_m, c_0)$, the graph resembles an X.

This reveals a key difference to the situation when $f''(c) \neq 0$.
In both the implicit function theorem and Theorem~\ref{thm:mainI}, the resulting
implicitly defined functions are unique.
This is due to the uniqueness of the fixed points in the contraction mapping principle.
But here, it appears that sometimes there are multiple different continuous choices of
$c(b)$ --- that is, if there are any at all.

In the top right graph of Figure~\ref{fig:triple}, we see that in a small neighborhood of
$c_0$, all tangent lines again have nonnegative slope.
But in a small neighborhood around $b_M$, the secant lines from $(0,0)$ to $(b, f(b))$ all
have nonpositive slope.
Thus there is no hope to extend $c$ to a function to a larger neighborhood at all.
We recognize this in the mean value abscissa graph below by seeing that $(b_M, c_0)$ is an
isolated point.

The behavior in the top center graph, near $b_i$, is a bit more delicate.
Here, in a small neighborhood of $c_0$, all tangent lines have nonpositive slope.
For $b$ just to the left of $b_i$, the secant lines from $(0,0)$ to $(b,f(b))$ have
nonpositive slope, and so it qualitatively appears that it might be possible to associate
points near $c$ with matching slopes.
But for $b$ just to the right of $b_i$, the secant lines all have positive slope, which
cannot be matched to slopes of points in a neighborhood of $c$.

These examples indicate a wider variety of behavior, and it's not at all obvious what the
general rule should be.
We cannot hope to directly apply an implicit function theorem without some significant
changes.

As with our investigation of the implicit function theorem, let us start with the Taylor
expansion of $F(b,c)$ at $(b_0,c_0)$. As $F(b_0,c_0) = F_b(b_0,c_0) = F_c(b_0,c_0) = 0$,
all the terms in this expansion are at least quadratic. 
For simplicity, let's assume that two of the quadratic terms are nonzero, more
specifically that the partial derivatives $F_{bb}(b_0, c_0) \neq 0$ and $F_{cc}(b_0, c_0)
\neq 0$.
In this case, there is a result called the Morse lemma which is perfectly tailored for our
situation!
A simple version of the Morse lemma is the following.
\begin{lemma}[Morse lemma]\label{lem:morse}
  Let $G=G(x,y)$ be a three-times continuously differentiable function and suppose that
  $G(0,0)=G_x(0,0)=G_y(0,0)=0$ but that
  \begin{align}
    \label{eq:hessian}
    G_{xx}(0,0) G_{yy}(0,0) - (G_{xy}(0,0))^2 \ne 0.
  \end{align}
  Then in a neighborhood of the origin there is a change of coordinates $(x,y) \mapsto (u,v)$
  so that
  \begin{align}
    \label{eq:morse}
    G(x,y) = \pm u^2 \pm v^2.
  \end{align}
\end{lemma}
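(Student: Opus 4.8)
The plan is to run the same ``Taylor expand, then manipulate'' strategy that powered the implicit function theorem: expand $G$ about the origin keeping an exact remainder, then repeatedly ``complete the square'' to reach the normal form \eqref{eq:morse}, checking at each stage that the substitution we perform really is a change of coordinates near the origin.

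\textbf{Step 1: a Hadamard-type expansion.} Since $G(0,0)=G_x(0,0)=G_y(0,0)=0$, applying Taylor's theorem with integral remainder to $t \mapsto G(tx,ty)$ and expanding the second $t$-derivative by the chain rule gives
\begin{equation*}
  G(x,y) = x^2\,A(x,y) + xy\,B(x,y) + y^2\,C(x,y),
\end{equation*}
where $A,B,C$ are integrals of the second partials of $G$ along the segment to $(x,y)$. Because $G$ is three times continuously differentiable, differentiating under the integral sign shows $A,B,C$ are continuously differentiable, and a direct computation gives $A(0,0)=\tfrac12 G_{xx}(0,0)$, $B(0,0)=G_{xy}(0,0)$, $C(0,0)=\tfrac12 G_{yy}(0,0)$. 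Thus the hypothesis \eqref{eq:hessian} says precisely that $4A(0,0)C(0,0)-B(0,0)^2\neq 0$.

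\textbf{Step 2: diagonalization by completing the square.} After a preliminary linear change of coordinates if necessary (swap $x$ and $y$, or replace $x$ by $x+y$ in the degenerate case $G_{xx}(0,0)=G_{yy}(0,0)=0$, which by \eqref{eq:hessian} forces $G_{xy}(0,0)\neq 0$), we may assume $A(0,0)\neq 0$, hence $A(x,y)\neq 0$ near the origin. Completing the square,
\begin{equation*}
  G(x,y) = A\Bigl(x + \tfrac{B}{2A}y\Bigr)^{2} + \Bigl(C - \tfrac{B^2}{4A}\Bigr)y^2 ,
\end{equation*}
I would set $u = \sqrt{\abs{A(x,y)}}\,\bigl(x + \tfrac{B}{2A}y\bigr)$ and $v = y$. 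Since $A$ is continuously differentiable and nonvanishing near the origin, $\sqrt{\abs{A}}$ is continuously differentiable there, so $(x,y)\mapsto(u,v)$ is a continuously differentiable map whose Jacobian at the origin is triangular with nonzero diagonal entries $\sqrt{\abs{A(0,0)}}$ and $1$; hence it is a genuine change of coordinates near the origin. In these coordinates $G = \pm u^2 + g(u,v)\,v^2$ for a continuously differentiable $g$ with $g(0,0) = \bigl(4A(0,0)C(0,0)-B(0,0)^2\bigr)/\bigl(4A(0,0)\bigr)\neq 0$, again by \eqref{eq:hessian}. Repeating the same move one more time, with $w = \sqrt{\abs{g(u,v)}}\,v$ and $u$ unchanged, turns $G$ into $\pm u^2 \pm w^2$; relabeling $w$ as $v$ completes the proof, the signs being those of $A(0,0)$ and $g(0,0)$ (equivalently, the signature of the Hessian).

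\textbf{Main obstacle.} The algebra of completing the square is routine; the delicate points are regularity and invertibility. The Hadamard expansion costs two derivatives, which is exactly why one must assume $G\in C^3$ rather than merely $C^2$: it leaves the coefficients $A,B,C$ continuously differentiable, just enough for the square roots above to be continuously differentiable and for the two substitutions to be $C^1$ coordinate changes. Second, the claim that $(x,y)\mapsto(u,v)$ is invertible near the origin is an appeal to the inverse function theorem, which is not literally Theorem~\ref{thm:implicit}; but each of our substitutions fixes one coordinate and is triangular in the other, so it can be inverted one variable at a time using Theorem~\ref{thm:implicit} (whose proof goes through verbatim when its ``$x$'' carries extra parameters). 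Pinning down these two points carefully is the crux; the rest is bookkeeping.
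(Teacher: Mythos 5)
Your proof is correct, but it is worth pointing out that you have proved more than the paper does: the paper explicitly declines to prove Lemma~\ref{lem:morse} in the generality stated, and instead only treats the special case needed for the application, namely $G(x,y)=g_1(x)-g_2(y)$ with $g_1''(0)\ne 0$ and $g_2''(0)\ne0$, where the cross term $G_{xy}$ vanishes identically. In that split case the paper simply writes $g_1(x)=x^2(\alpha+r_1(x))$ and $g_2(y)=y^2(\beta+r_2(y))$ via one-variable Taylor remainders, rescales each variable separately by $u=x\sqrt{\sigma_1(\alpha+r_1(x))}$ and $v=y\sqrt{\sigma_2(\beta+r_2(y))}$, and checks invertibility of each substitution with Theorem~\ref{thm:implicit} exactly as stated, since each of $F_1(x,u)$ and $F_2(y,v)$ genuinely involves only two variables. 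Your argument is the classical general proof: the Hadamard-type expansion $G=x^2A+xyB+y^2C$ with $C^1$ coefficients, a preliminary linear change to arrange $A(0,0)\ne0$ (your phrase ``replace $x$ by $x+y$'' should more precisely be ``precompose with $(x,y)\mapsto(x,x+y)$,'' so that the new $G_{xx}(0,0)=2G_{xy}(0,0)\ne0$; the idea is right), and two rounds of completing the square. What this generality costs you is exactly the point you flag: your substitution $u=\sqrt{\lvert A(x,y)\rvert}\,(x+\tfrac{B}{2A}y)$ mixes the variables, so inverting it is a one-parameter family of scalar equations and needs a parametrized version of Theorem~\ref{thm:implicit} (or the inverse function theorem); the contraction-mapping proof does extend to carry the parameter uniformly, but this is an extension the paper never has to make because its substitutions are each functions of a single variable. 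In short: your route proves the lemma as stated and would be the right proof if the general statement were actually needed; the paper's route is weaker but self-contained given its Theorem~\ref{thm:implicit}, and suffices for the mean value abscissa application.
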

The number of minus signs on the right hand side of \eqref{eq:morse} is called the
\emph{Morse index} of $G$ at $0$. It is independent of the particular choice of
coordinates $(u,v)$, and is one of the basic ingredients in Morse theory~\cite{matsumoto}.
By a ``change of coordinates'' $(x, y) \mapsto (u, v)$, we mean that $u$ and $v$ can be
written as continuously differentiable functions of $(x, y)$, while at the same time $x$
and $y$ can be written as continuously differentiable functions of $(u, v)$. We also
require that $(0,0) \mapsto (0,0)$.

\begin{remark}
  Those familiar with multivariable calculus might recognize the conditions of the
  Morse lemma as an alternate way of saying that the gradient of $G$ vanishes at the
  origin, but the Hessian matrix is invertible there.
  A full proof of the Morse lemma involves the implicit function theorem in higher
  dimensions (or its close cousin the inverse function theorem).
  But we will see below that in our special case, Theorem~\ref{thm:implicit} is
  sufficient.
\end{remark}

We consider the function $G(x, y) = F(b_0 + x, c_0 + y)$, which effectively translates our
focus to the origin. Just as a solution to $F(b, c) = 0$ corresponds to a mean value
abscissa, a solution to $G(x, y) = 0$ also corresponds to a mean value abscissa; in
particular, $G(0, 0) = 0$ corresponds to the given mean value abscissa $c_0$ on the
interval $[a_0, b_0]$. Our assumptions on the partial derivatives of $F$ at $(b_0,c_0)$
similarly translate $G_x(0,0) = G_y(0,0) = 0$ while $G_{xx}(0,0) \ne 0$ and $G_{yy}(0,0)
\ne 0$. A quick calculation shows that $G_{xy}(0,0) = 0$ --- indeed $G_{xy}(x,y)$ is
always zero! Thus \eqref{eq:hessian} is satisfied, and, if $f$ is four-time continuously
differentiable so that $G$ is three-times continuously differentiable, we can apply the
Morse lemma.

In fact, our situation is a bit simpler than the one covered by the Morse lemma, and so we
will only prove the special case of the lemma that we need.
What's special is that $G$ naturally splits into a function depending only on $x$ and a
function depending only on $y$:
We can write $G(x, y) = g_1(x) - g_2(y)$, where
\begin{equation}\label{eq:g1g2}
  g_1(x) = \frac{f(b_0+x) - f(a_0)}{(b_0+x) - a_0} - f'(c_0),
  \qquad
  g_2(y) = f'(c_0+y) - f'(c_0).
\end{equation}
Thus $G(x, y) = 0$ is equivalent to $g_1(x) = g_2(y)$.
We include the $f'(c_0)$ terms in \eqref{eq:g1g2} so that $g_1(0) = g_2(0) = 0$, so we can continue
to focus our attention on the origin. Our assumptions $G_x(0,0) = G_y(0,0)$ now translate
into $g_1'(0) = g_2'(0)$, while our assumptions $G_{xx}(0,0) \ne 0$ and $G_{xy}(0,0) \ne
0$ translate into $g_1'(0) \ne 0$ and $g_2'(0) \ne 0$.

Taylor expanding $g_1$ and $g_2$ gives the approximations
\begin{equation*}
  g_1(x) \approx \frac{g_1''(0)}{2!} x^2 = \frac{1}{b_0 - a_0} \frac{f''(b_0)}{2!} x^2,
  \qquad
  g_2(y) \approx \frac{g_2''(0)}{2!} y^2 = \frac{f'''(c_0)}{2!} y^2,
\end{equation*}
and hence the approximation $G(x, y) \approx \alpha x^2 + \beta y^2$
with $\alpha = g_1''(0) / 2$ and $\beta = g_2''(0) / 2$.
We will show that we can choose coordinates $u$ and $v$ to make this approximation
\emph{exact} while at the same time taking the constants to be $\pm 1$.

To do this, we use Taylor's theorem to write $g_1$ and $g_2$ exactly as
\begin{align*}
  g_1(x) &= \alpha x^2 + r_1(x) x^2 = x^2\big(\alpha + r_1(x)\big) \\
  g_2(y) &= \beta y^2 + r_2(y) y^2 = y^2\big(\beta + r_2(y)\big),
\end{align*}
where $r_1(x)$ and $r_2(y)$ are remainder terms. Thus $r_1(x)$ is small when $x$ is
near $0$ and $r_2(y)$ is small when $y$ is near $0$.
Ideally, we would like to take coordinates like $u = x\sqrt{\alpha + r_1(x)}$ and $v = y
\sqrt{\beta + r_2(y)}$, so that $G(x, y) = u^2 - v^2$ (whose zeros are very easy to
study).
But if, for instance, $\alpha < 0$ and $r_1(x) \approx 0$, then we would be trying to take
the square root of a negative number! 

To get around this, we multiply $g_1(x)$ by $\sigma_1 = \mathrm{sgn}(\alpha)$, which is
$1$ if $\alpha > 0$ and $-1$ if $\alpha < 0$. Then $\sigma_1 g_1(x) = x^2(\sigma_1\alpha + \sigma_1r_1(x))$.
As $r_1(x)$ is the remainder term, we can choose $\delta > 0$ such that $\lvert r_1(x)
\rvert < \lvert \alpha \rvert$ for all $x$ satisfying $\lvert x \rvert < \delta$.
In this interval, $\sigma_1 \alpha + \sigma_1 r_1(x)$ is always positive.
Similarly we multiply $g_2(x)$ by $\sigma_2 = \mathrm{sgn}(\beta)$, so that $\sigma_2
\beta > 0$. As with $r_1$, in a sufficiently small neighborhood around $0$ we have that
$\lvert r_2(y) \rvert < \lvert \beta \rvert$, and in this neighborhood $\sigma_2 \beta +
\sigma_2 r_2(y)$ is always positive.

This allows us to write $u = x\sqrt{\sigma_1(\alpha + r_1(x))}$ and $v = y
\sqrt{\sigma_2(\beta + r_2(y))}$, which is nearly the ideal choices described above.
But to be proper coordinates we require these maps to be invertible.
To study these potential coordinates, we again use the implicit function
theorem~\ref{thm:implicit}.
Namely, we study the zeroes of the two functions
\begin{equation}\label{eq:cov_morse}
  F_1(x, u) = x \sqrt{\sigma_1(\alpha + r_1(x))} - u,
  \qquad
  F_2(y, v) = y \sqrt{\sigma_2(\beta + r_2(y))} - v
\end{equation}
in small neighborhoods of the origin (small enough so that the arguments of the
square roots are always positive).
We calculate that $F_1(0,0) = F_2(0,0) =0$ while
\begin{align*}
  \frac{\partial F_1}{\partial x}(0, 0)  = \sqrt{\sigma_1 \alpha},
  \enskip
  \frac{\partial F_2}{\partial y}(0, 0)  = \sqrt{\sigma_2 \beta},
  \enskip
  \frac{\partial F_1}{\partial u}(0, 0)  = -1,
  \enskip
  \frac{\partial F_2}{\partial v}(0, 0)  = -1,
\end{align*}
all of which are nonzero.
By the implicit function theorem, the first two equalities show that $x = X(u)$ and $y =
Y(v)$ in a neighborhood of $(x, y, u, v) = (0, 0, 0, 0)$. The second two equalities
confirm that $u = U(x)$ and $v = V(y)$ also holds in a neighborhood of the origin.
Further, each of these coordinate maps is continuously differentiable.
Thus $(x, y) \mapsto (u, v)$ is valid change of coordinates.

We have proved that $G(x, y) = g_1(x) - g_2(y) = \sigma_1 u^2 - \sigma_2 v^2$ around a
neighborhood of the origin, i.e.~that the conclusion \eqref{eq:morse} of the Morse lemma
holds.
To continue our investigation of mean value abscissae, we examine
\begin{equation*}
  G(x, y) = \sigma_1 u^2 - \sigma_2 v^2 = 0.
\end{equation*}
There are a few different possibilities depending on the combinations of the signs
$\sigma_1$ and $\sigma_2$.
In terms of the original function $f$, we note that $\sigma_1$ is the sign of $f''(b_0)$
and $\sigma_2$ is the sign of $f'''(c_0)$.
\begin{enumerate}
  \item[(i)]
    If $\sigma_1$ and $\sigma_2$ have opposite signs, then $G(x, y) = 0$ is equivalent to
    $u^2 = - v^2$. The only solution is $(u, v) = (0, 0)$ and this solution is isolated.
  \item[(ii)]
    If $\sigma_1$ and $\sigma_2$ have the same sign, $G(x, y) = 0$ is equivalent to $u^2 =
    v^2$.
    This has two solutions $u = \pm v$, and no other nearby solutions.
\end{enumerate}
Looking again at Figure~\ref{fig:triple}, we see that case (i) corresponds to the right
graph, and case (ii) corresponds to the left graph.

We summarize the results of our exploration in the following theorem.

\begin{theorem}\label{thm:mainII}
  Let $f$ be a four-times continuously differentiable function and fix an interval
  $[a_0,b_0] \subset \mathbb{R}$.
  Suppose $c_0$ is a mean value abscissa for $f$ on the interval $[a_0,b_0]$, and suppose that
  $f''(c_0) = 0$ and $f'(b_0) = f'(c_0)$.
  Finally, suppose that both $f''(b_0)$ and $f'''(c_0)$ are nonzero.
  Then
  \begin{itemize}
    \item If $f''(b_0)$ and $f'''(c_0)$ have opposite signs, then $c_0$ cannot be extended
      to a continuous function $c=C(b)$ near $b_0$.
    \item If $f''(b_0)$ and $f'''(c_0)$ have the same sign, then there are two continuously
      differentiable functions $c=C_1(b)$ and $c=C_2(b)$ solving \eqref{eq:MVT} for $b$
      near $b_0$. There are no other nearby solutions.
  \end{itemize}
\end{theorem}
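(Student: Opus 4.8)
The plan is to read Theorem~\ref{thm:mainII} off the Morse-type normal form established in the discussion above, translating statements about the zero set of $G$ into statements about~\eqref{eq:MVT}. First I would fix the bookkeeping: with $F$ as in~\eqref{eq:F_def} and $G(x,y)=F(b_0+x,\,c_0+y)$, a solution $(b,c)$ of~\eqref{eq:MVT} near $(b_0,c_0)$ is exactly a zero $(x,y)=(b-b_0,\,c-c_0)$ of $G$ near the origin. The hypothesis that $c_0$ is a mean value abscissa gives $G(0,0)=0$; the hypotheses $f'(b_0)=f'(c_0)$ and $f''(c_0)=0$ give $G_x(0,0)=G_y(0,0)=0$; and $f''(b_0)\neq 0$, $f'''(c_0)\neq 0$ supply the nondegeneracy of the quadratic part, exactly as computed above via the splitting $G(x,y)=g_1(x)-g_2(y)$ from~\eqref{eq:g1g2}. (Because of this splitting we never need to normalize $f(a_0)$ or $f(b_0)$.) Since $f$ is four-times continuously differentiable, $g_1$ and $g_2$ are three-times continuously differentiable, so the construction above applies and produces a change of coordinates $(x,y)\mapsto(u,v)$ of the special form $u=U(x)$, $v=V(y)$ --- with continuously differentiable inverses $X=U^{-1}$ and $Y=V^{-1}$, all fixing the origin --- for which $G(x,y)=\sigma_1 u^2-\sigma_2 v^2$, where $\sigma_1=\mathrm{sgn}\,f''(b_0)$ and $\sigma_2=\mathrm{sgn}\,f'''(c_0)$.

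Then I would split into the two cases. If $f''(b_0)$ and $f'''(c_0)$ have opposite signs, then $\sigma_1 u^2-\sigma_2 v^2=\pm(u^2+v^2)$, so the only zero of $G$ near the origin is $(0,0)$; equivalently, $(b_0,c_0)$ is an isolated solution of~\eqref{eq:MVT}. From this I would conclude that there is no continuous $c=C(b)$ passing through $(b_0,c_0)$: such a $C$ would, by continuity, send every $b$ near $b_0$ to a solution $(b,C(b))$ near $(b_0,c_0)$, and for $b\neq b_0$ this solution would be distinct from $(b_0,c_0)$, contradicting isolation. If instead $f''(b_0)$ and $f'''(c_0)$ have the same sign, then $\sigma_1 u^2-\sigma_2 v^2=\sigma_1(u^2-v^2)$, so $G(x,y)=0$ is equivalent to $v=\pm u$, i.e.\ to $V(y)=\pm U(x)$, i.e.\ to $y=Y\bigl(\pm U(x)\bigr)$. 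Setting $C_1(b)=c_0+Y\bigl(U(b-b_0)\bigr)$ and $C_2(b)=c_0+Y\bigl(-U(b-b_0)\bigr)$ then gives two continuously differentiable functions solving~\eqref{eq:MVT} for $b$ near $b_0$, and the same equivalence shows that there are no other nearby solutions. Finally I would record that $C_1\neq C_2$ near $b_0$: the two curves meet only at $(b_0,c_0)$, and $C_1'(b_0)=U'(0)/V'(0)=-C_2'(b_0)\neq 0$.

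I do not expect a serious obstacle, since the analytic heavy lifting --- the special case of the Morse lemma --- has already been carried out, and what remains is bookkeeping. The two steps that warrant a little care are the passage from ``$(b_0,c_0)$ is isolated'' to ``no continuous choice $c=C(b)$ exists,'' and the honest realization of the branches $v=\pm u$ as continuously differentiable functions of $b$ together with the verification that they are genuinely distinct; both rely on the inverse coordinate maps $X$ and $Y$, whose existence and regularity were secured by Theorem~\ref{thm:implicit}.
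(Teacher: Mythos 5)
Your proposal is correct and follows essentially the same route as the paper: it invokes the special case of the Morse lemma proved via the splitting $G(x,y)=g_1(x)-g_2(y)$ and the coordinate change $u=U(x)$, $v=V(y)$, then reads off the two cases from the sign pattern of $\sigma_1 u^2 - \sigma_2 v^2$. The only difference is that you spell out the final bookkeeping (the explicit formulas for $C_1$, $C_2$ and the isolation argument) a bit more carefully than the paper does, which is fine.
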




\section{Analytic Functions.}

Looking back, if $f''(c_0) \ne 0$ we can use Theorem~\ref{thm:mainI}, while if $f''(c_0) =
0$ but $f'''(c_0) \neq 0$ and $f''(b_0) \ne 0$, then we can use Theorem~\ref{thm:mainII}. What if $f''(c_0) =
f'''(c_0) = 0$ but $f^{(4)}(c_0) = f''''(c_0) \ne 0$? Or, even more ambitiously, what if
\begin{align}
  \label{eqn:vanishing}
  f'(c_0) = f''(c_0) = \cdots = f^{(k)}(c_0) = 0
  \quad \text{but} \quad f^{(k+1)}(c_0) \ne 0
\end{align}
for $k=10$ or $k=200$? Ideally we do not want to have to prove a new theorem for each of
these cases. There is also the possibility that \emph{all} of the derivatives of $f$
vanish at $c_0$. For instance this is what happens at $c_0 = 1$ for the classic ``bump
function'', defined to be $\exp(-1/(1-x^2))$ for $-1 < x < 1$ and $0$ otherwise.

We can rule out this last possibility by restricting to analytic functions.
Recall that a function $f$ is \emph{analytic} if the Taylor series for $f$ centered at
each point $x_0$ converges to $f$ in a neighborhood of $x_0$. That is, for each $x_0$, we
have the equality
\begin{align*}
  f(x) = \sum_{n = 0}^\infty \frac{f^{(n)}(x_0)}{n!}(x - x_0)^n
\end{align*}
for all $x$ in a neighborhood of $x_0$. One of the nice things about analytic functions is
that we can only have $f^{(k)}(x_0) = 0$ for all $k \geq 1$ if $f$ is a constant function.
For the rest of this section we will assume that $f$ is a non-constant analytic
function, in which case we can always find a $k$ so that \eqref{eqn:vanishing} holds.

With this assumption in mind, let us return to the function $G(x, y) = F(x + b_0, y + c_0)$,
where $F$ is the implicit function~\eqref{eq:F_def} whose zeros represent solutions to the
mean value theorem relation~\eqref{eq:MVT}. As in the previous section, we will write
$G(x, y) = g_1(x) - g_2(y)$ where $g_1$ and $g_2$ are as in~\eqref{eq:g1g2}.
The given mean value abscissa implies that $g_1(0) = g_2(0) = 0$.
But unlike before, the first nonzero term in the Taylor expansion of $g_2$ is when
$f^{(k+1)}(c_0) \neq 0$, yielding the approximation
\begin{align*}
  g_2(y) \approx \beta_0  y^k.
\end{align*}
where $\beta_0 = f^{(k+1)}(c_0)/k!$ is a nonzero constant.
Similarly picking $\ell$ so that
\begin{align}
  \label{eqn:morevanishing}
  f'(b_0) = f''(b_0) = \cdots = f^{(\ell-1)}(b_0) = 0
  \quad \text{but} \quad f^{(\ell)}(b_0) \ne 0,
\end{align}
a slightly more involved calculation shows that
\begin{align*}
  g_1(x) \approx  \alpha_0 x^\ell,
\end{align*}
where this time the nonzero constant is $\alpha_0 = f^{(\ell)}(b_0)/(\ell!(b-a))$.
We call $\ell$ and $k$ the \emph{order of vanishing} at the origin for $g_1$ and $g_2$, respectively.
Our equation $G(x,y) = 0$ now seems to be approximately
\begin{align*}
  \alpha_0 x^\ell \approx \beta_0 y^k.
\end{align*}
As in our proof of a special case of the Morse lemma, we will make this precise by finding
new coordinates $u,v$ so that $G(x,y)=0$ is \emph{exactly} either $v^k = u^\ell$ or $v^k =
-u^\ell$.

As $f$ is analytic, we can see that both $g_1$ and $g_2$ are analytic. Representing $g_1$
and $g_2$ by their Taylor expansion near $0$, we can write them as
\begin{equation*}
  g_1(x)
  =
  x^\ell \sum_{m=0}^\infty \alpha_m x^m,
  \qquad
  g_2(y)
  =
  y^k \sum_{m=0}^\infty \beta_m  y^m,
\end{equation*}
where $\alpha_0 \ne 0$ and $\beta_0 \ne 0$ were defined above.
As with our special case of the Morse lemma, we now multiply $g_1$ by $\sigma_1 =
\mathrm{sgn} \alpha_0$ and
$g_2$ by $\sigma_2 = \mathrm{sgn} \beta_2$, enabling us to take roots in a neighborhood of $0$.

Taking these roots, we can define two smooth (in fact analytic) functions by
\begin{equation*}
  F_1(x, u) = x \Big( \sigma_1\sum_{m=0}^\infty \alpha_m x^m \Big)^{1/\ell} - u,
  \qquad
  F_2(y, u) = y \Big( \sigma_2\sum_{m=0}^\infty \beta_m y^m \Big)^{1/k} - v,
\end{equation*}
in a neighborhood of the origin. Suppose for the moment that the equations $F_1(x,u)=0$
and $F_2(y,v)=0$ defined a smooth change of coordinates $(x,y) \mapsto (u,v)$. Then, in the
$(u,v)$ coordinates, we would have $g_1 = \sigma_1 u^k$ and $g_2 = \sigma_2 u^\ell$ so
that $G(x,y)=0$ was equivalent to
\begin{align}
  \label{eq:vk}
  u^k = \frac{\sigma_2}{\sigma_1} v^\ell = \pm v^\ell,
\end{align}
analogous to the Morse lemma but with higher powers.

Checking that $F_1(x, u) = 0$ and $F_2(y, v) = 0$ define an invertible change of
coordinates $x = X(u)$ and $y = Y(v)$ can be proved from the implicit function
theorem Theorem~\ref{thm:implicit}, and in this case the proof is almost identical to the
proof for~\eqref{eq:cov_morse}, the change of coordinates from our consideration of
Morse's lemma.
Thus $(x,y) \mapsto (u, v)$ is a valid change of coordinates, and we can study $G(x, y)
= 0$ by studying solutions to~\eqref{eq:vk}.

Thinking about the graph of \eqref{eq:vk} for different values of $k$ and $\ell$, and
different combinations of signs of $\sigma_1$ and $\sigma_2$, we find that
\begin{enumerate}
\item[(i)] If $k$ is odd, then there is one continuous solution $v=V_1(u)$ of
  \eqref{eq:vk} in a neighborhood of the origin, and no other nearby solutions.
\item[(ii)] If $k$ and $\ell$ are both even and $\sigma_2/\sigma_1 = +1$ then there are
  two continuous solutions $v=V_1(u)$ and $v=V_2(u)$ of \eqref{eq:vk} in a neighborhood of
  the origin, and no other nearby solutions.
\item[(iii)] If $k$ and $\ell$ are both even and $\sigma_2/\sigma_1 = -1$, then the origin
  is an isolated solution of \eqref{eq:vk}.
\item[(iv)] If $k$ is even and $\ell$ is odd, then there are two continuous solutions
  $v=V_1(u)$ and $v=V_2(u)$ of \eqref{eq:vk}, but they are only defined in a one-sided
  neighborhood of the origin where $(\sigma_2/\sigma_1) u \ge 0$.
\end{enumerate}
Since we can always write $y=Y(v)$ and $u=U(x)$, continuously solving for $v$ in terms of
$u$ is equivalent to continuously solving for $y$ in terms of $x$. As
we're solving $G(x,y) = F(x+b_0,y+c_0) = 0$, this is equivalent to continuously
solving~\eqref{eq:MVT} for the abscissa $c=c_0+y$ in terms of the endpoint $b=b_0+x$ for
$x$ in a neighborhood of $(b,c)= (b_0,c_0)$.

Thus we can find a continuous choice of mean value abscissa near any point where
(i) or (ii) hold. The following lemma tells us that there
there is \emph{always} at least one such point (i) holds.

\begin{lemma}
  Let $f$ be a non-constant analytic function satisfying $f(a_0) = f(b_0) = 0$.
  Then there is a mean value abscissa $c_0$ of $f$ on $[a_0, b_0]$ such that $f'(c_0) = 0$
  and such that the smallest $k \geq 1$ such that $f^{(k)}(c_0) \neq 0$ is even,
  i.e.\ such that the order of vanishing of $f'$ at $c_0$ is odd.
\end{lemma}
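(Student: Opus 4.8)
The plan is to use a compactness argument together with the observation that, since $f(a_0)=f(b_0)=0$, the mean value abscissae for $f$ on $[a_0,b_0]$ are exactly the critical points of $f$ in the open interval $(a_0,b_0)$ (the secant slope on the left of~\eqref{eq:MVT} is zero). First I would note that a non-constant analytic function cannot be constant on any interval (by the identity theorem), so $f$ is non-constant on $[a_0,b_0]$; since also $f(a_0)=f(b_0)=0$, the function $f$ must take a value that is either strictly positive or strictly negative somewhere on $(a_0,b_0)$. Replacing $f$ by $-f$ if necessary — which changes neither the critical points nor the orders of vanishing of the derivatives, and preserves both the hypothesis $f(a_0)=f(b_0)=0$ and the mean value relation — I may assume that $\max_{[a_0,b_0]} f > 0$.

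Next I would take $c_0 \in [a_0,b_0]$ to be a point where this maximum is attained. Because $f$ vanishes at both endpoints while $f(c_0)>0$, the point $c_0$ lies in the open interval $(a_0,b_0)$, and since it is an interior maximum we have $f'(c_0)=0$; thus $c_0$ is a mean value abscissa. Let $K\ge 1$ be the smallest index with $f^{(K)}(c_0)\ne 0$. Such a $K$ exists, for otherwise the Taylor series of $f$ centered at $c_0$ would be the constant $f(c_0)$, forcing $f$ to be constant near $c_0$ and hence (again by analyticity) everywhere, contrary to hypothesis. Since $f'(c_0)=0$ we have $K\ge 2$.

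It remains to show $K$ is even. Here I would Taylor expand about $c_0$: for $x$ near $c_0$,
\begin{equation*}
  f(x)-f(c_0) = \frac{f^{(K)}(c_0)}{K!}\,(x-c_0)^K + o\big((x-c_0)^K\big),
\end{equation*}
so $f(x)-f(c_0)$ has the same sign as $\frac{f^{(K)}(c_0)}{K!}(x-c_0)^K$ for all $x$ sufficiently close to, but not equal to, $c_0$. If $K$ were odd this quantity would change sign as $x$ passes through $c_0$, contradicting that $c_0$ is a local maximum. Hence $K$ is even. Since $f'(c_0)=f''(c_0)=\cdots=f^{(K-1)}(c_0)=0$ while $f^{(K)}(c_0)\ne 0$, the order of vanishing of $f'$ at $c_0$ equals $K-1$, which is odd, as claimed.

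I do not expect a serious obstacle; the only points needing care are the reduction steps — invoking analyticity to pass from ``$f$ non-constant'' to ``$f$ non-constant on $[a_0,b_0]$'' and to ``not all derivatives of $f$ vanish at $c_0$'', and the harmless normalization $f\mapsto -f$ — together with the conceptual observation that what we really need is the existence of a \emph{local extremum} among the critical points of $f$ (as opposed to an inflection-type critical point like that of $x^3$), since it is exactly a local extremum that forces the first nonvanishing derivative to have even order.
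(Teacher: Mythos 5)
Your proposal is correct and follows essentially the same route as the paper: select an interior absolute extremum of $f$ (which must exist and be a critical point since $f(a_0)=f(b_0)=0$ and $f$ is non-constant), then use the leading term of the Taylor expansion at that point to rule out an odd first nonvanishing derivative. Your write-up is simply a more careful version of the paper's argument, adding the identity-theorem justification, the $f\mapsto -f$ normalization, and the explicit sign analysis of $f(x)-f(c_0)$.
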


\begin{proof}
  Since $f$ is non-constant, $f$ takes an absolute maximum or an absolute minimum at a
  point $c$ within the interval $(a_0, b_0)$. Notice that $c$ is not an endpoint of
  the interval since $f(a_0) = f(b_0) = 0$ and the function is not constant.
  We choose $c_0$ to be this point $c$.

  As $c_0$ is an extremum, $f'(c_0) = 0$.
  From its Taylor expansion, we see that near $c_0$, $f$ is very closely approximated by
  $f(c_0) + a_k(x - c)^{k}$, where $a_k = f^{(k)}(c_0) / (k!)$. If $k$ were odd, then $f$
  would be strictly increasing or decreasing at $c_0$, contradicting the fact that it has
  a local extremum there.
\end{proof}

With this lemma, we are now ready to complete our study of when there exist continuous
choices of mean value abscissae $c=C(b)$.
Recall that we assume without loss of generality that $f(a_0) = f(b_0) = 0$.
Let $c_0$ be a mean value abscissa for $f$ on $[a_0, b_0]$ such that the order of
vanishing of $f'$ at $c_0$ is odd, as guaranteed by the lemma.
The order of vanishing of $f'$ at $c_0$ is the same as the order of vanishing of $g_2$ at
$0$ by construction, and thus the lemma indicates that the $k$ appearing in~\eqref{eq:vk}
is odd. Thus we are in case (i), and we can uniquely solve \eqref{eq:MVT} for $c=C(b)$ in
a neighborhood of $b_0$.
This completes the proof of the following theorem.

\begin{theorem}\label{thm:analytic_main}
  Let $f$ be real analytic on the interval $[a_0,b_0]$.
  Then there exists at least one mean value abscissa $c_0 \in (a_0,b_0)$ such that $c_0$
  is a mean value abscissa for $f$ on $[a_0,b_0]$, and for which there exists a
  continuous function $c = C(b)$ such that
  \begin{equation*}
    \frac{f(b) - f(a)}{b - a} = f'(C(b))
  \end{equation*}
  for all $b$ in a neighborhood of $b_0$. There are no other solutions near $(b_0,c_0)$.
\end{theorem}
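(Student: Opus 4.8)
The plan is to reduce Theorem~\ref{thm:analytic_main} to the machinery already assembled in the analytic section, using the lemma to locate a good mean value abscissa and case (i) of the analysis of~\eqref{eq:vk} to finish. First I would reduce to the normalized setting: given $f$ real analytic on $[a_0,b_0]$, replace it by $g(x) = f(x) - \big(\tfrac{f(b_0)-f(a_0)}{b_0-a_0}(x-a_0) + f(a_0)\big)$ as in Section~\ref{sec:examples}, so that $g(a_0) = g(b_0) = 0$, $g$ is still analytic, and $g$ has exactly the same solutions of~\eqref{eq:MVT} as $f$. If $g$ is constant then $f$ is linear, every point of $(a_0,b_0)$ is a mean value abscissa, and any continuous $C(b)$ (say $C(b) = (a_0+b)/2$) works; so we may assume $g$ is non-constant.

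Next I would apply the Lemma to $g$: it produces a point $c_0 \in (a_0,b_0)$ which is a mean value abscissa for $g$ on $[a_0,b_0]$ (indeed $g'(c_0)=0$ since $c_0$ is an interior absolute extremum of $g$ and $g(a_0)=g(b_0)=0$ forces the secant slope to be $0$), and such that the order of vanishing of $g'$ at $c_0$ is odd. Translating to the origin via $G(x,y) = F(x+b_0, y+c_0)$ with $F$ as in~\eqref{eq:F_def} and writing $G(x,y) = g_1(x) - g_2(y)$ as in~\eqref{eq:g1g2}, the order of vanishing $k$ of $g_2$ at $0$ equals the order of vanishing of $g'$ (equivalently $f'$) at $c_0$, hence $k$ is odd. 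Here I should also check that the order of vanishing $\ell$ of $g_1$ is finite, i.e.\ that $g_1$ is not identically zero near $0$: otherwise $g$ would be linear on an interval and hence, being analytic, linear everywhere, contradicting non-constancy combined with the secant being horizontal --- so $\ell < \infty$. This legitimizes the coordinate-change construction of the section: the functions $F_1, F_2$ built from the Taylor expansions of $\sigma_1 g_1$ and $\sigma_2 g_2$ satisfy the hypotheses of Theorem~\ref{thm:implicit}, giving an analytic change of coordinates $(x,y) \mapsto (u,v)$ in which $G(x,y)=0$ becomes~\eqref{eq:vk}.

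Finally, since $k$ is odd, we are in case (i): the equation $u^k = \pm v^\ell$ has, in a neighborhood of the origin, the single continuous solution $v = V_1(u) = (\pm u^\ell)^{1/k}$ (the $k$-th root is a well-defined continuous, indeed analytic where $u\ne 0$, function since $k$ is odd), and no other nearby solutions. Composing with $y = Y(v)$ and $v = V_1(u)$ and $u = U(x)$ yields a continuous $y = y(x)$ solving $G(x,y)=0$, and unwinding the translation gives a continuous $c = C(b) = c_0 + y(b - b_0)$ solving $\tfrac{g(b)-g(a_0)}{b-a_0} = g'(C(b))$ --- equivalently~\eqref{eq:MVT} for the original $f$ --- for all $b$ near $b_0$, with no other solutions near $(b_0,c_0)$.

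The main obstacle is not any single hard estimate --- all the analytic heavy lifting (contraction mapping, implicit function theorem, convergence of the root series) is already done earlier --- but rather the bookkeeping needed to make sure every hypothesis is legitimately in force: that $g$ non-constant forces both orders of vanishing $k$ and $\ell$ to be \emph{finite} (so the Taylor-series roots make sense), that the Lemma's extremum really is an interior mean value abscissa, and that the normalization $f \rightsquigarrow g$ genuinely preserves the solution set of~\eqref{eq:MVT} together with its topology near $(b_0,c_0)$. Assembling these pieces into a clean statement is the real content of the final step.
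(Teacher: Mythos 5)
Your proposal follows essentially the same route as the paper: normalize so that $f(a_0)=f(b_0)=0$, invoke the lemma to produce an abscissa $c_0$ at which $f'$ vanishes to odd order, and then conclude via case (i) of the classification of $u^k = \pm v^\ell$ with $k$ odd. The extra bookkeeping you supply (finiteness of $\ell$, the degenerate linear case) is sensible and slightly more careful than the paper itself --- though note that for linear $f$ the ``no other solutions'' clause actually fails, an edge case the paper sidesteps by assuming $f$ non-constant throughout the section.
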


\begin{figure}[h]
  \includegraphics[scale=0.9]{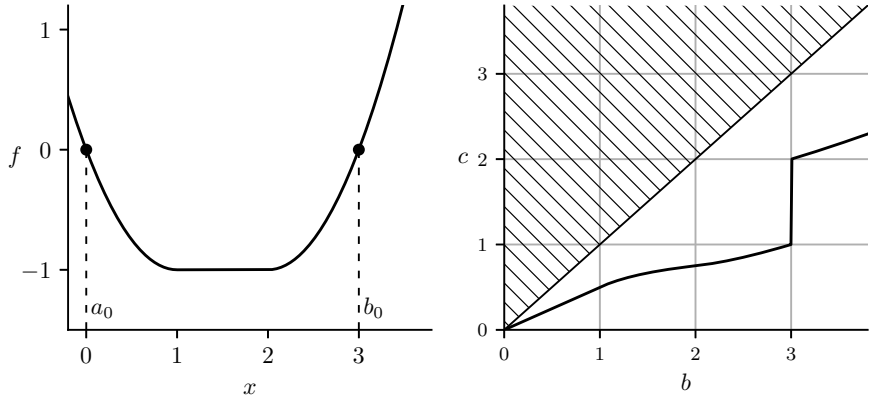}
  \centering
  \caption{%
  A smooth function where no continuous choice of mean value abscissa exists.
  \label{fig:smooth}}
\end{figure}

\begin{remark}

As a final note, we note that being ``merely'' infinitely differentiable is not strong
enough to guarantee that there is always a choice of a mean value abscissa with a
continuous dependence on the right endpoint.
For a counterexample, see Figure~\ref{fig:smooth}.
One can construct a smooth function of this shape from bump functions.
On the indicated interval $[0, 3]$, every value $c_0$ with $1 \leq c_0 \leq 2$ is a valid mean
value abscissa; this is reflected in the mean value abscissa plot on the right by a
vertical line segment from $(3, 1)$ to $(3, 2)$.
For $b$ just to the left of $b_0$, the slope of the secant line is negative, and for $b$
just to the right of $b_0$, the slope of the secant line is positive.
But any value $c_0$ is either at least distance $1/2$ away from a point $c$ where $f'(c) <
0$ or a point $c$ where $f'(c) > 0$.
There is no continuous choice of mean value abscissa for this function.

\end{remark}

\section{Reflection and Further Questions.}

The major selling-point of Theorem~\ref{thm:analytic_main} is that it gives us a
continuous choice $c=C(b)$ of mean value abscissa without any assumptions on $f$ other
than analyticity.
On the other hand, in cases where we \emph{do} know more about our favorite abscissa
$c_0$, the classification (i)--(iv) of the curves~\eqref{eq:vk} gives much more
information about nearby solutions.
And for the local picture, we should expect ``most'' points for ``most'' intervals to not
be degenerate enough that Theorem~\ref{thm:mainI} and Theorem~\ref{thm:mainII} both fail.

There are many more questions that one could ask about the set of solutions
to~\eqref{eq:MVT}.
Firstly, what if you allow the left endpoint $a$ to vary as well as $b$ and look for
continuous choices $c=C(a,b)$?
The techniques we have used will still be very powerful, but for instance the
decomposition $G(x,y) = g_1(x) - g_2(y)$ in the above section will no longer be as simple.

One could also study the \emph{global} structure of the solution sets shown in
Figures~\ref{fig:curlym} and~\ref{fig:triple}. How many different connected components are
there?  In what ways can they ``begin'' and ``end''? Answering such questions will require 
very different techniques.

\section*{Acknowledgements}

D Lowry-Duda was supported by the  National Science Foundation Graduate Research
Fellowship Program under Grant No.\ DGE 0228243 and the EPSRC Programme Grant
EP/K034383/1 LMF: L-Functions and Modular Forms. 

Miles H.\ Wheeler was supported by the National Science Foundation under Grant No.\ DMS-1400926.

We also thank the many contributors to the python programming packages NumPy, SymPy, and
matplotlib, as we used this software for our own exploration and to create the functions
and figures in this article. A copy and description of the code used for this article are
available at\\
\url{http://davidlowryduda.com/choosing-functions-for-mvt-abscissa/}.


\end{document}